\newtheorem{theorem}{Theorem}[section]
\newtheorem{thm}[theorem]{Theorem}
\newtheorem{lem}[theorem]{Lemma}
\newtheorem{prop}[theorem]{Proposition}
\newtheorem{dfn}[theorem]{Definition}
\newtheorem{rem}[theorem]{Remark}
\newtheorem{exa}[theorem]{Example}
\numberwithin{equation}{section}
\title{On the annulus complex of a handlebody}
\author{Dongqi Sun\thanks{Supported by a grant of NSFC (Grant No.12101153)}\\College of Mathematical Sciences, Harbin Engineering University,\\ Harbin 150001, CHINA. \\Email: sundq1029@hrbeu.edu.cn\\
}
\date{}
\begin{document}

\maketitle

\begin{abstract}
In this paper, we give the definition of the annulus complex of a handlebody and use the combinatorial methods to prove its connectivity.
 \\
\\Keywords: 3-manifold; handlebody; annulus complex; connectivity\\
\noindent Mathematics Subject Classification 2020: 57K30; 57M50
\end{abstract}

\baselineskip 18.5pt
\section{Introduction}

For an orientable closed surface, Harvey\cite{Harvey} introduced its curve complex. Since then, many other simplicial complexes appears. A common characteristic of these complexes is their connectivity.

Similarly, for a handlebody, some complexes are also defined.
Let $H_g$ be an orientable handlebody of genus $g$.
It is known that the disk complex $\mathcal{D}$ of $H_g$ is connected, namely, any two essential disks in $\mathcal{D}$ are connected by a path in $\mathcal{D}$. In fact, $\mathcal{D}$ is contractible, see \cite{McCullough}. In addition, other complexes of $H_g$ have been demonstrated to be connected, such as cut system complex, pants complex, separating disk complex, half disk complex, disk pants graph, nonseparating disk pants graph and so on. For example, see \cite{Cho-Koda,Johnson,Wajnryb,Sun} .

There are some useful tools to prove the connectivity of these complexes, such as curve surgery, Morse/Cerf theroy, and Teichm$\ddot{u}$ller theory. For example, see
\cite{Farb,Harer,H-T,Hatcher,Iva,M-S,Penner}.
Putman\cite{Put} proved the connectivity using the basic combinatorial group theory, which was also adopted in\cite{Guo-Liu}. Putman's method requires the use of a set of appropriate generators of the mapping class group of $\partial H_g$.

In the present paper, we define the annulus complex of a handlebody $H_g$, and
then prove the connectivity of the annulus complex using combinatorial topology methods. As defined below, the vertices of the annulus complex are essential annuli in $H_g$, and the boundary curves of these annuli are complicated.
Consequently, employing other methods to establish the connectivity of the annulus complex is challenging. So this article uses the traditional combinatorial methods.

This article is organized as follows. In Section 2, we introduce some basic definitions and some conclusions. In Section 3, we discuss the properties of essential annuli in a handlebody. In Section 4, we prove the connectivity of the annulus complex of a handlebody, see Theorem\ref{thm3}.

Throughout this paper, the number of elements in a set $C$ is denoted by $|C|$.
The intersection number of the objects $A$ and $B$ is denoted by $|A \cap B|$. We always assume $|A\cap B|$ is minimal, namely,
$|A \cap B|=\min\{|A'\cap B'|~|A'~is~isotopic~to~A,\\~B'~is~isotopic~to~B\}$.
The interior of the object $A$ is denoted by $int(A)$.

The definitions and terminologies not defined here are standard. See, for example, \cite{Hempel} or \cite{Jaco}.

\section{Preliminaries}

In this section, we introduce some basic definitions and useful results.

\begin{dfn}

Let $H_g$ be an orientable handlebody of genus $g\geq 2$, and $D$ be an essential disk in $H_g$. Let $\alpha$ be a simple arc in $\partial H_g$ with $\alpha\cap D=\alpha\cap \partial D=\partial \alpha$ and both points of $\partial \alpha$ meeting $\partial D$ from the same side of $\partial D$. Let $N(\alpha)=\alpha\times [0,1]$ be the regular neighborhood of $\alpha$ in $\partial H_g$, $\alpha=\alpha\times \{\frac{1}{2}\}$, and $N(\alpha)\cap D=\partial \alpha\times [0,1]$.
Connect $N(\alpha)$ and $D$ along $\partial \alpha\times [0,1]$ and push $\alpha\times (0,1)$ into $int(H_g)$ slightly. Then we get a properly embedded annulus denoted by $A=D\cup (\alpha\times [0,1])$.
We say that $A$ is the band-sum of $D$ along $\alpha$. This process is called doing a band-sum to disk $D$ along an arc $\alpha$ to get an annulus. See Fig.\ref{fig7}.

\end{dfn}

\begin{figure}[htbp]
\centering
\includegraphics[scale=0.8]{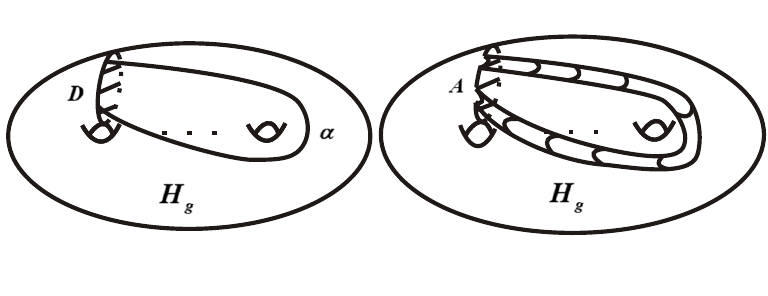}
\caption{Do band-sum to $D$ along $\alpha$ in $H_g$.}\label{fig7}
\end{figure}

From the above definition, we know that if an essential annulus $A=D\cup (\alpha\times [0,1])$, then $\partial-$compressing $A$ along a disk which intersects $\alpha$ transversely nonempty can regain the essential disk $D$.

\begin{dfn}

Let $H_g$ be an orientable handlebody of genus $g\geq 2$, and $D$ be an essential disk in $H_g$. Let $\alpha$ be a simple arc in $\partial H_g$ with $\alpha\cap D=\alpha\cap \partial D=\partial \alpha$ and both points of $\partial \alpha$ meeting $\partial D$ from the same side of $\partial D$.
If doing band-sum to $D$ along $\alpha$ results in an essential annulus in $H_g$, then the arc $\alpha$ is called an essential arc associated with $D$.

\end{dfn}

Essential annulus is a type of simple essential orientable surface in the handlebody. Let $\mathfrak{A}$ be a collection of pairwise disjoint non-parallel essential annuli in $H_g$. $\mathfrak{A}$ is {\it maximal} if whenever $A$ is an essential annulus in $H_g$ with $A\cap \mathfrak{A}=\phi$ then $A$ is parallel to a component of $\mathfrak{A}$ in $H_g$.
Rubinstein-Scharlemann\cite{R-S},
Lei-Tang\cite{Lei-Tang} and Yin-Tang-Lei\cite{Yin-Tang-Lei} studied the $|\mathfrak{A}|$.

\begin{prop}\cite{R-S}\label{prop2.5}
Let $H_2$ be an orientable handlebody of genus $2$. Then a maximal collection of essential annuli $\mathfrak{A}$ could contain exactly $1$, or $2$, or at most $3$ annuli.
\end{prop}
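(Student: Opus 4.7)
The plan is to cut $H_2$ along a maximal collection $\mathfrak{A}=\{A_1,\ldots,A_n\}$ into pieces $\Sigma_1,\ldots,\Sigma_k$ and exploit the interplay between maximality of $\mathfrak{A}$, an Euler-characteristic identity on $H_2$, and the limited combinatorics of essential simple closed curves on the genus-$2$ surface $\partial H_2$. Since each $A_i$ is incompressible and $\partial$-incompressible in the irreducible manifold $H_2$, each $\Sigma_j$ is itself irreducible with incompressible boundary, and a van Kampen / Bass--Serre argument shows that $\pi_1(\Sigma_j)$ remains free, so $\Sigma_j$ is a handlebody of some genus $g_j$. The identity $\chi(H_2)=\sum_j \chi(\Sigma_j)$ translates into $\sum_j(1-g_j)=-1$, i.e., $\sum_j g_j=k+1$.

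The main work is using maximality to restrict the $g_j$. No $\Sigma_j$ can be a $3$-ball: both boundary circles of any frontier annulus are essential in $\partial H_2$, so by incompressibility they cannot bound disks on a $2$-sphere. More delicately, if some $\Sigma_j$ has $g_j\geq 2$, one would construct inside $\Sigma_j$ an essential annulus disjoint from and non-parallel to the frontier annuli, for instance by choosing an essential disk of $\Sigma_j$ whose boundary avoids the frontier and band-summing it along an appropriate arc in the complement of the frontier in $\partial \Sigma_j\cap\partial H_2$. Viewed inside $H_2$ this annulus would extend $\mathfrak{A}$, contradicting maximality. Hence each $\Sigma_j$ is a solid torus, and combining this with the separating/non-separating dichotomy for the $A_i$ pins down the admissible pairs $(k,n)$.

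To convert this structure into the numerical bound I would project the $2n$ boundary circles of $\mathfrak{A}$ onto $\partial H_2$ and note that their pairwise non-isotopic representatives form part of a pants decomposition of a genus-$2$ surface, hence at most $3g-3=3$ isotopy classes. Combined with the solid-torus piece analysis, which restricts how many frontier annuli can lie in the boundary torus of a single solid-torus piece along compatible slopes, this forces $n\leq 3$. Realization of all three values $n=1,2,3$ is then carried out by explicit examples: a single non-separating annulus whose complementary solid torus admits no further disjoint non-parallel essential annulus; a pair of annuli cutting $H_2$ into solid tori that resist further extension; and three annuli whose boundary circles form a pants decomposition of $\partial H_2$, with each annulus spanning two of the pants curves.

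The main obstacle will be the genus bound: verifying that every piece $\Sigma_j$ of genus $\geq 2$ indeed contains an essential annulus avoiding its prescribed frontier annuli and non-parallel to them. This requires fine control on how essential disks and essential annuli in a handlebody can be arranged disjoint from a specified annular boundary pattern, and is where the bulk of the technical labor in the Rubinstein--Scharlemann argument resides; the Euler-characteristic count and the pants-decomposition bound are comparatively formal once the piece classification is in hand.
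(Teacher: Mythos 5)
The paper offers no proof of this proposition: it is imported verbatim from Rubinstein--Scharlemann \cite{R-S}, so there is nothing in-paper to compare your argument with. Judged on its own terms, your proposal contains a step that is not merely unproved but false, and it sits at the heart of the argument. You claim that if a complementary piece $\Sigma_j$ of $H_2\setminus\mathfrak{A}$ has genus $\geq 2$, then one can always produce an essential annulus inside $\Sigma_j$ disjoint from and non-parallel to the frontier annuli, contradicting maximality, and you conclude that every piece is a solid torus. This contradicts your own Euler-characteristic identity: from $\sum_j(1-g_j)=\chi(H_2)=-1$ and $g_j\geq 1$ you get $\sum_j g_j=k+1$, which forces exactly one piece of genus $2$ (all pieces being solid tori would give $k=k+1$). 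It also contradicts the very statement you are proving: a maximal collection of size $1$ exists, and cutting $H_2$ along that single annulus leaves a genus-$2$ piece (two solid tori would have total Euler characteristic $0$, not $-1$; your "complementary solid torus" in the realization paragraph is the same slip). The reason the step fails is that an essential disk of $\Sigma_j$ whose boundary avoids the frontier annuli need not exist, and even when a band-sum annulus can be formed away from the frontier, it may be inessential in $H_2$ or parallel in $H_2$ to one of the $A_i$ even though it is not parallel in $\Sigma_j$. The correct analysis must classify which annuli a genus-$2$ piece with prescribed annular spots actually carries, which is the substance of the Rubinstein--Scharlemann classification.

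A secondary gap: bounding the isotopy classes of the $2n$ boundary circles on $\partial H_2$ by $3g-3=3$ does not bound $n$, because distinct, non-parallel essential annuli can have isotopic boundary slopes (two boundary circles of a single annulus can even be isotopic on $\partial H_2$). To pass from "at most $3$ slopes" to "at most $3$ annuli" you need to show that disjoint non-parallel essential annuli sharing a boundary slope are themselves constrained, and that is again where the real work lies. As written, the proposal establishes neither the upper bound $n\leq 3$ nor the non-extendability of the size-$1$ and size-$2$ examples (which requires showing that \emph{every} essential annulus meets the given ones, not just exhibiting the collection).
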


\begin{prop}\cite{Lei-Tang,Yin-Tang-Lei}
Let $H_g$ be an orientable handlebody of genus $g\geq 3$. Then for a maximal collection of essential annuli $\mathfrak{A}$, $2\leq|\mathfrak{A}|\leq 4g-5$ and the bound is best.
\end{prop}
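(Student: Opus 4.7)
The plan is to prove the lower bound by explicit construction, the upper bound by a combinatorial-topological analysis of the pieces of $H_g \setminus \mathfrak{A}$, and the sharpness by an inductive construction.

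For the lower bound $2 \leq |\mathfrak{A}|$, since $g \geq 3$ one picks two disjoint essential meridian disks $D_1, D_2$ whose cocores lie on distinct handles of $H_g$, and applies the band-sum construction of Definition~2.1 to each $D_i$ along an arc $\alpha_i \subset \partial H_g$, choosing $\alpha_1, \alpha_2$ so that their regular neighborhoods in $\partial H_g$ are disjoint. The resulting annuli $A_1, A_2$ are essential, disjoint, and non-parallel, so any maximal collection must contain at least two elements.

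For the upper bound $|\mathfrak{A}| \leq 4g-5$, cut $H_g$ along $\mathfrak{A}$ into pieces $V_1, \ldots, V_k$. Each $V_i$ inherits a handlebody structure from $H_g$, and because annuli have zero Euler characteristic,
$$\sum_{i=1}^{k} \chi(\partial V_i) \;=\; \chi(\partial H_g) \;=\; 2-2g.$$
Maximality of $\mathfrak{A}$ forces each $V_i$ to contain no further essential annulus disjoint from the cut annuli already on $\partial V_i$, which severely constrains its combinatorial type: Proposition~\ref{prop2.5} handles the genus-$2$ case, while ball and solid-torus pieces are handled by direct inspection. If $m_i = |\partial V_i \cap \mathfrak{A}|$ counts the annular boundary components from the cut, then $\sum m_i = 2|\mathfrak{A}|$; the maximality constraint gives an upper bound on each $m_i$ in terms of the genus of $V_i$, and combining this with the Euler identity above yields $|\mathfrak{A}| \leq 4g-5$. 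For sharpness, one exhibits a maximal family of $4g-5$ annuli inductively on $g$, starting from the three-annulus example in Proposition~\ref{prop2.5} and attaching four new annuli for each additional handle.

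The main obstacle is the case analysis inside the upper bound: one must classify precisely which handlebodies can occur as pieces of a maximal cut, determine the maximum number of boundary annuli each can support while remaining saturated, and verify that the arithmetic sums to exactly $4g-5$ rather than a weaker bound. The inductive step of the sharpness construction---identifying a handle that can be peeled off together with four associated annuli while preserving maximality---is equally delicate, and synchronising it with the upper-bound count is where the bulk of the work will lie.
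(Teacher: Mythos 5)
First, note that the paper does not prove this proposition at all: it is quoted from \cite{Lei-Tang} and \cite{Yin-Tang-Lei} as a known result, so there is no internal proof to measure your argument against; the question is only whether your proposal would stand on its own. It would not, and the clearest gap is in the lower bound. You construct one particular pair of disjoint, non-parallel essential annuli $A_1,A_2$, which shows that \emph{some} collection of size two exists. But the claim $2\leq|\mathfrak{A}|$ quantifies over \emph{every} maximal collection: a maximal collection could a priori be a singleton $\{A\}$, namely whenever every essential annulus disjoint from $A$ is parallel to $A$. Proposition~\ref{prop2.5} shows this actually happens in genus two, so the phenomenon is not hypothetical. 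To prove the lower bound for $g\geq 3$ you must start from an \emph{arbitrary} essential annulus $A$ and produce a second essential annulus disjoint from and non-parallel to it; you do not get to choose $D_1$ and $D_2$ on convenient handles, because $A$ is handed to you. That argument (e.g.\ $\partial$-compress the given $A$ to a disk $D$ and exploit $\chi(\partial H_g\setminus\partial D)\leq -2$ to band-sum along a disjoint arc, then check non-parallelism) is the actual content of the lower bound and is absent.

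For the upper bound and sharpness, your text is a plan rather than a proof: the classification of the complementary pieces $V_i$, the bound on each $m_i$ in terms of the genus of $V_i$, the verification that the arithmetic gives $4g-5$ and not something weaker, and the inductive construction of a maximal family realizing $4g-5$ are exactly the theorems of \cite{Lei-Tang} and \cite{Yin-Tang-Lei}, and you explicitly defer all of them as ``the main obstacle.'' One further unjustified step: you assert each piece $V_i$ ``inherits a handlebody structure from $H_g$.'' This is true but requires proof (it rests on the fact that every incompressible annulus in a handlebody is $\partial$-compressible, hence is a banded disk, so cutting along it again yields handlebodies); it is not a formal consequence of cutting along incompressible surfaces. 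You also need, and do not state, the fact that maximality must be checked against annuli that \emph{intersect} $\partial V_i$ in the copies of the cut annuli, not merely against annuli properly embedded in a single $V_i$, which is where the case analysis becomes delicate.
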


Now the annulus complex of a handlebody is defined as follows.

\begin{dfn}

Let $H_g$ be an orientable handlebody of genus $g\geq2$. The annulus complex $\mathcal{A}$ of $H_g$ is a simplicial complex whose vertices are the isotopy classes of essential annuli in $H_g$ and
a collection of $k+1$ distinct vertices constitute a $k-$simplex if there are pairwise disjoint representatives.

\end{dfn}

Similar to the dimension of the disk complex, the {\it dimension} of the annulus complex $\mathcal{A}$ is defined to be the biggest dimension of the simplices in $\mathcal{A}$.

By Propositon\ref{prop2.5}, we have the following proposition about the dimension of the annulus complex $\mathcal{A}$.

\begin{prop}

Let $H_g$ be an orientable handlebody of genus $g\geq2$. The dimension of the annulus complex $\mathcal{A}$ is $4g-6$.

\end{prop}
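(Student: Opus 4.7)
The plan is to read the dimension directly off Proposition~\ref{prop2.5} and the subsequent result of \cite{Lei-Tang,Yin-Tang-Lei}, once one identifies the $k$-simplices of $\mathcal{A}$ with collections of $k+1$ pairwise disjoint, pairwise non-parallel essential annuli in $H_g$.

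First I would make that identification precise. By definition, a $k$-simplex of $\mathcal{A}$ is exactly a set of $k+1$ distinct isotopy classes of essential annuli admitting pairwise disjoint representatives. Any such collection $\mathfrak{C}$ is either already maximal in the sense preceding Proposition~\ref{prop2.5}, or can be enlarged by one more essential annulus disjoint from, and non-parallel to, every member of $\mathfrak{C}$. Because the universal upper bounds of \cite{R-S,Lei-Tang,Yin-Tang-Lei} are finite, this enlargement terminates and embeds $\mathfrak{C}$ into a maximal collection of cardinality at least $|\mathfrak{C}|$. Conversely, any maximal collection of size $n$ is itself an $(n-1)$-simplex of $\mathcal{A}$. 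Combining the two directions gives
\begin{equation*}
\dim\mathcal{A}\;=\;\max\bigl\{\,|\mathfrak{A}|:\ \mathfrak{A}\text{ is a maximal collection of essential annuli in }H_g\,\bigr\}\;-\;1.
\end{equation*}

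Next I would insert the two quoted bounds. For $g=2$, Proposition~\ref{prop2.5} says a maximal $\mathfrak{A}$ has $|\mathfrak{A}|\in\{1,2,3\}$ with the value $3$ realized, so $\dim\mathcal{A}=3-1=2=4(2)-6$. For $g\geq 3$, \cite{Lei-Tang,Yin-Tang-Lei} yields the sharp bound $2\leq|\mathfrak{A}|\leq 4g-5$, so $\dim\mathcal{A}=(4g-5)-1=4g-6$. In each case the upper bound on $\dim\mathcal{A}$ uses the inequality half of the cited proposition, while equality uses its sharpness (the ``and the bound is best'' clause).

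Since the argument reduces to this translation, I do not expect a genuine obstacle: the real content is already packaged inside the two cited propositions, whose existence-of-extremal-collection halves do the nontrivial 3-manifold topology. The one point I would want to write out carefully is the identification between maximum-size simplices in $\mathcal{A}$ and maximum-size maximal collections in $H_g$, since without it one could worry that some simplex fails to sit inside a maximal collection -- a worry eliminated precisely by the finite universal upper bounds on $|\mathfrak{A}|$ supplied by \cite{R-S,Lei-Tang,Yin-Tang-Lei}.
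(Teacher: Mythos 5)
Your proposal is correct and follows essentially the same route as the paper, which states this proposition without a written proof and simply derives it from the cited results on maximal collections of essential annuli: identifying $k$-simplices with collections of $k+1$ pairwise disjoint, pairwise non-parallel (equivalently, non-isotopic) essential annuli and reading off the sharp bound $|\mathfrak{A}|\leq 4g-5$ (respectively $3$ when $g=2$) gives $\dim\mathcal{A}=4g-6$. Your extra care about embedding an arbitrary simplex into a maximal collection is a reasonable point to make explicit, though it is immediate from the finiteness of the cited upper bounds.
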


\section{The properties of essential annuli in a handlebody}

In this section, several properties of the essential annuli in $H_g$ are considered.

\begin{prop}
Let $H_g$ be an orientable handlebody of genus $g\geq2$, and $D$ be an essential disk in $H_g$. Then there are at least two different essential
annuli $A_1$ and $A_2$ with $A_1\cap A_2=\phi$, both of which are obtained by doing band-sum to $D$ along disjoint arcs.
\end{prop}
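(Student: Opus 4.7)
The strategy is to cut $H_g$ along $D$, choose an essential arc on each side of $\partial D$ in $\partial H_g$, and form band-sums to produce disjoint essential annuli.

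First I would cut $H_g$ along $D$ to obtain a handlebody $H'$ (connected of genus $g-1$ if $D$ is nonseparating, the disjoint union of two handlebodies of positive genera summing to $g$ if $D$ is separating). In either case, two disjoint disks $D^+, D^-$ appear on $\partial H'$, and since $g \geq 2$ the component of $H'$ containing each $D^{\pm}$ has genus at least one. On each side I would select a simple essential arc in $\partial H' \setminus int(D^+ \cup D^-)$: an arc $\alpha_1$ with both endpoints on $\partial D^+$ and an arc $\alpha_2$ with both endpoints on $\partial D^-$, each chosen to wind around a handle of $H'$. Returning to $\partial H_g$, these arcs lie in disjoint neighborhoods on opposite sides of $\partial D$, so they can be taken disjoint (with a small adjustment in the nonseparating case to keep them in disjoint regions of $\partial H'$).

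Next I would perform the band-sums $A_i = D \cup (\alpha_i \times [0,1])$ for $i = 1,2$ and verify three properties. For \emph{essentiality}: incompressibility of $A_i$ follows because its core curve is the loop formed by $\alpha_i$ together with an arc in $D$, which represents a nontrivial element of $\pi_1(H_g)$ because $\alpha_i$ winds around a handle of $H'$; non-$\partial$-parallelism follows from the remark after the band-sum definition, since the cocore of the band gives a $\partial$-compression of $A_i$ back to the essential disk $D$, and no $\partial$-parallel annulus $\partial$-compresses to an essential disk. For \emph{disjointness}: after a slight isotopy pushing the disk portion of $A_1$ toward the $+$ side and that of $A_2$ toward the $-$ side, together with the fact that the bands themselves are already disjoint on $\partial H_g$, I would obtain $A_1 \cap A_2 = \emptyset$. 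For \emph{non-isotopy}: I would arrange $\alpha_1, \alpha_2$ so that $A_1$ and $A_2$ have boundary multicurves in distinct isotopy classes on $\partial H_g$ — e.g., by having them wind around different handles when $g \geq 3$, or different numbers of times around the single other handle when $g = 2$.

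The main obstacle is verifying essentiality: one must confirm that the boundary curves of $A_i$ are not meridional in $H_g$ and that the core curve is nontrivial in $\pi_1(H_g)$. Both conditions hinge on choosing $\alpha_i$ so that, when closed up through an arc in $D^{\pm}$, it represents a nontrivial element of $\pi_1(H')$; not every boundary-essential arc has this property, so the construction must specifically target arcs that wind around a handle of $H'$ rather than arcs which, though essential in the cut surface, would close up to the trivial loop.
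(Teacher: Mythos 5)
Your proposal is correct and follows essentially the same route as the paper: cut $H_g$ along $D$, pick one essential arc on each of the two copies of $D$ in the boundary of the cut-open handlebody (using that each relevant piece has positive genus), band-sum, and push the two parallel copies of $D$ apart to achieve disjointness. You merely treat the separating and nonseparating cases uniformly where the paper splits them, and you spell out the essentiality and disjointness checks that the paper leaves as "easy to see."
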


\begin{proof}
There are two cases for the essential disk $D$ to be considered.

\textbf{Case} 1: $D$ is separating in $H_g$.

Assume $H_g\setminus D=H_{g_1}\cup H_{g_2}$ with $g(H_{g_i})=g_i\geq 1 (i=1,2)$ and $g_1+g_2=g$. Then there exist essential arcs $\alpha_i (i=1,2)$
associated with $D$ such that $\alpha_1\subset \partial H_{g_1}$ and $\alpha_2\subset \partial H_{g_2}$. See Fig.\ref{fig1}.

\begin{figure}[htbp]
\centering
\includegraphics[scale=0.6]{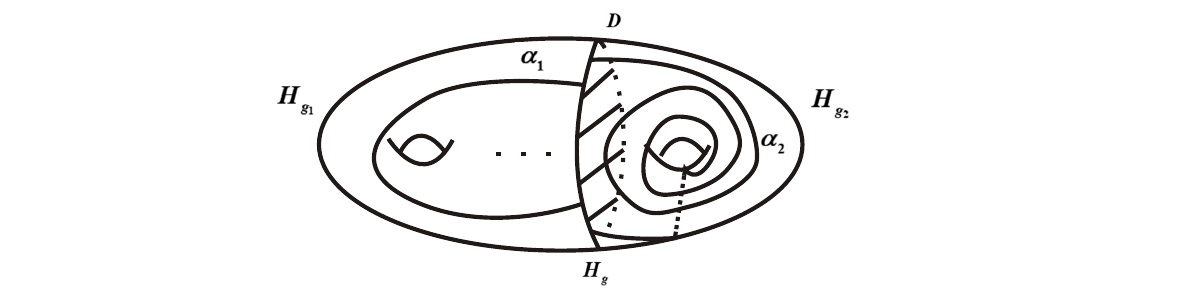}
\caption{$D$ is separating in $H_g$.}\label{fig1}
\end{figure}

Let $A_1=D\cup (\alpha_1\times [0,1])$ and $A_2=D\cup (\alpha_2\times [0,1])$.
It is
easy to see that $A_1$ is not isotopic to $A_2$ and $A_1\cap A_2=\phi$.

\textbf{Case 2}: $D$ is nonseparating in $H_g$.

Now $H_g\setminus D=H'$ is a handlebody of genus $g(H')=g-1$. Assume the two cutting sections of $D$ on $\partial H'$ are denoted
by $D'$ and $D''$. Since $g\geq 2$, $\chi(\partial H'\setminus(D_1\cup D_2))=2-2(g-1)-2=2-2g\leq-2$. Thus, there exist two disjoint essential arcs $\alpha_1$ and $\alpha_2$ associated with $D$
such that $\partial \alpha_1 \in \partial D'$ and $\partial \alpha_2 \in \partial D''$. See Fig.\ref{fig2}.

\begin{figure}[htbp]
\centering
\includegraphics[scale=0.6]{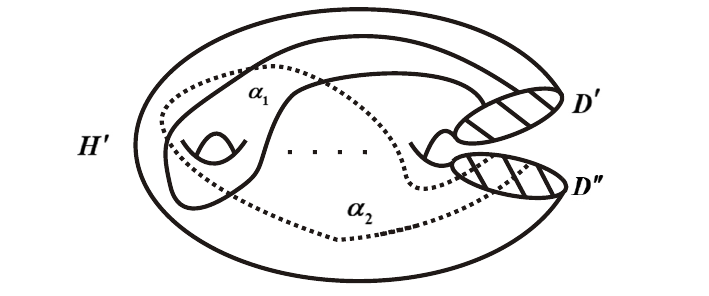}
\caption{$D$ is nonseparating in $H_g$.}\label{fig2}
\end{figure}

Now $A_1=D\cup (\alpha_1\times [0,1])$ and $A_2=D\cup (\alpha_2\times [0,1])$ are two essential annuli in $H_g$. It is easy to see that $A_1$ is not isotopic to $A_2$ and $A_1\cap A_2=\phi$.

\end{proof}

\begin{prop}

Let $H_g$ be an orientable handlebody of genus $g\geq 2$, and $A$ be an essential annulus in $H_g$. If $A=D\cup (\alpha\times [0,1])$, then for $A$ and $D$, either both are separating in $H_g$ or both are nonseparating in $H_g$.

\end{prop}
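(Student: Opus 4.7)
My plan is to use the parity criterion for separating surfaces: a properly embedded orientable surface $S\subset H_g$ is separating if and only if $|\gamma\cap S|\equiv 0\pmod 2$ for every loop $\gamma\subset\operatorname{int}(H_g)$ in general position. The proposition then reduces to a single congruence $|\gamma\cap A|\equiv|\gamma\cap D|\pmod 2$ for all such $\gamma$, which I would establish in two steps.

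First, I would decompose $A$ into $D$ and the pushed band. Write $\widetilde{N(\alpha)}$ for $N(\alpha)$ after its interior has been pushed into $\operatorname{int}(H_g)$. As a set $A=D\cup\widetilde{N(\alpha)}$, with only a $1$-dimensional overlap coming from the arcs $\partial\alpha\times[0,1]$. For a loop $\gamma$ in general position with respect to all of $A$, $D$ and $\widetilde{N(\alpha)}$, transversality yields $|\gamma\cap A|=|\gamma\cap D|+|\gamma\cap\widetilde{N(\alpha)}|$, since a generic loop misses the $1$-dimensional overlap. Consequently, it suffices to prove $|\gamma\cap\widetilde{N(\alpha)}|\equiv 0\pmod 2$, i.e., that the pushed band $\widetilde{N(\alpha)}$ is itself separating in $H_g$.

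The central step, and where I expect the main care to be needed, is to recognize $\widetilde{N(\alpha)}$ as a boundary-parallel disk. Its boundary is the $4$-sided curve $\alpha\times\{0\}\cup\alpha\times\{1\}\cup\partial\alpha\times[0,1]$ on $\partial H_g$, which bounds the original rectangle $N(\alpha)\subset\partial H_g$ and is therefore null-homotopic there. By the irreducibility of the handlebody $H_g$, a properly embedded disk with null-homotopic boundary must cobound a $3$-ball with a disk on $\partial H_g$; here that ball has $2$-sphere boundary $\widetilde{N(\alpha)}\cup N(\alpha)$. A $\partial$-parallel disk is separating, so $|\gamma\cap\widetilde{N(\alpha)}|\equiv 0\pmod 2$, which combined with the previous equality gives $|\gamma\cap A|\equiv|\gamma\cap D|\pmod 2$. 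Applying the parity criterion to both $A$ and $D$ then yields the desired equivalence.
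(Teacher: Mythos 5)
Your argument is correct, but it is a genuinely different route from the paper's. The paper proves the proposition by inspecting the complement directly: it splits into the two cases ($D$ separating or not), cuts $H_g$ along $D$, and identifies the pieces of $H_g\setminus A$ explicitly (when $D$ separates, $H_g\setminus A$ has two components, one isotopic to $H_{g_2}$ and one to $H_{g_1}$ with a $1$-handle attached; when $D$ does not separate, $H_g\setminus A$ is a single handlebody obtained from $H'=H_g\setminus D$ by adding a $1$-handle). You instead run a mod-$2$ homological argument: the parity criterion for separating two-sided surfaces, the additivity $|\gamma\cap A|=|\gamma\cap D|+|\gamma\cap\widetilde{N(\alpha)}|$ for a generic loop, and the observation that the pushed band is a $\partial$-parallel disk (its boundary bounds $N(\alpha)$ on $\partial H_g$, so capping off gives a sphere that bounds a ball by irreducibility), hence contributes evenly. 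In effect you are showing $[A]=[D]$ in $H_2(H_g,\partial H_g;\mathbb{Z}/2)$. Each approach has its merits: the paper's case analysis yields extra structural information about the complementary pieces (which is in the spirit of the constructions used later in the paper), while yours avoids the case split entirely, is shorter, and would apply verbatim to band-sums in any irreducible $3$-manifold. The only point I would tidy is the identification of $A$ with the literal union $D\cup\widetilde{N(\alpha)}$: in the honest properly embedded picture the disk $D$ is also perturbed slightly near the attaching arcs $\partial\alpha\times[0,1]$, but since mod-$2$ intersection parity is an isotopy invariant this does not affect your computation.
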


\begin{proof}

There are two cases for $D$ to be considered.

\textbf{Case 1}: $D$ is separating in $H_g$.

Let $H_g\setminus D=H_{g_1}\cup H_{g_2}$ with $g_i\geq 1 (i=1,2)$ and $g_1+g_2=g$. Without loss of generality, assume
$\alpha \subset \partial H_{g_2}$. Then $H_g\setminus A$ contains two components with one of which isotopic to $H_{g_2}$ and the other isotopic to the handlebody obtained by adding a 1-handle to $H_{g_1}$. So $A$ is separating in $H_g$.

\textbf{Case 2}: $D$ is nonseparating in $H_g$.

Let $H_g\setminus D=H'$. So $H'$ is a handlebody of genus $g(H')=g-1$. Assume the two cutting sections of $D$ on $\partial H'$ are denoted by
$D'$ and $D''$. Without loss of generality, assume $\partial \alpha\in \partial D'$. Then $H_g\setminus A$ is isotopic to the handlebody which is obtained by adding a 1-handle to $H'$. So $A$ is nonseparating in $H_g$.

\end{proof}

\begin{prop}\label{prop3.3}

Let $A$ be an essential annulus in $H_g$. Then $\partial-$compressing $A$ may result in different essential disks in $H_g$.

\end{prop}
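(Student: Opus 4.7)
The statement is existential: we must exhibit one essential annulus $A$ in $H_g$ such that two choices of $\partial$-compressing disks yield non-isotopic essential disks. The strategy is to invoke the remark after the first definition: if $A=D\cup(\alpha\times[0,1])$ is a band-sum, then $\partial$-compressing $A$ along any disk meeting $\alpha$ transversely recovers $D$. So it suffices to produce an essential annulus $A$ admitting two genuinely distinct band-sum descriptions $A=D_1\cup(\alpha_1\times[0,1])=D_2\cup(\alpha_2\times[0,1])$ with $D_1$ and $D_2$ non-isotopic; the $\partial$-compressions dual to the two bands then output the two disks.

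I would work in $H_2$; the statement for $g\geq 3$ then follows by an obvious embedding of such an example into a larger handlebody. By the previous proposition, $D_1$ and $D_2$ must have the same separating type as $A$, so I would take both nonseparating for simplicity. Concretely, I would draw an essential annulus $A$ whose two boundary circles $c_1,c_2$ lie in a sub-surface of $\partial H_2$ amenable to two different band decompositions, and then identify two essential arcs $\gamma_1,\gamma_2$ in $\partial H_2\setminus(c_1\cup c_2)$ joining $c_1$ to $c_2$, placed on opposite sides of $A$ inside $H_2$. Each $\gamma_i$, together with an essential spanning arc $\beta_i\subset A$, bounds a $\partial$-compressing disk $E_i$; the $\partial$-compression of $A$ along $E_i$ produces an essential disk $D_i$ whose boundary is obtained from $c_1\cup c_2$ by cutting at the endpoints of $\gamma_i$ and splicing in two parallel copies of $\gamma_i$.

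The main obstacle is certifying that $D_1$ and $D_2$ really are non-isotopic essential disks in $H_2$. I would arrange $\gamma_1$ and $\gamma_2$ to traverse different $1$-handles, or to cross a fixed meridional curve of $\partial H_2$ in a different number of points, so that $\partial D_1$ and $\partial D_2$ have different geometric intersection numbers with this curve and hence cannot be isotopic on $\partial H_2$. Essentiality of each $D_i$ is automatic from the essentiality of $A$ and the fact that $\gamma_i$ is non-trivial in the relevant component of $\partial H_2\setminus\partial A$. The remaining task is simply to present a carefully drawn figure confirming the two band decompositions of the single annulus $A$, which is routine once the example is set up.
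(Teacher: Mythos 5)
Your overall strategy is the right one and matches the paper's: the proposition is proved by exhibiting a single essential annulus that admits two band-sum descriptions $A=D'\cup(\alpha\times[0,1])=D''\cup(\beta\times[0,1])$ with $D'$ and $D''$ non-isotopic, so that the two dual $\partial$-compressions return different disks. However, there is a genuine gap: the existence of such an annulus is the entire content of the statement, and your proposal never actually produces one. You say you would ``draw an essential annulus $A$ whose two boundary circles lie in a sub-surface amenable to two different band decompositions'' and then find two $\partial$-compressing disks on opposite sides, but you give no mechanism guaranteeing that a single annulus carries two distinct band decompositions onto non-isotopic disks; everything is deferred to a ``carefully drawn figure'' that is asserted to be routine. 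Your subsequent discussion of certifying non-isotopy via intersection numbers with a meridian is sensible, but there is nothing yet to certify.

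The paper's example supplies exactly the missing device. Take two disjoint, non-isotopic, nonseparating essential disks $D_1,D_2$ and two disjoint arcs $\alpha_1,\alpha_2$ on $\partial H_g$ joining $\partial D_1$ to $\partial D_2$. Let $D'$ be the disk obtained by pushing $int\bigl(D_1\cup(\alpha_1\times[0,1])\cup D_2\bigr)$ into $int(H_g)$ (the boundary-connected sum of $D_1$ and $D_2$ along $\alpha_1$), and let $D''$ be the analogous disk using $\alpha_2$. Then the band-sum of $D'$ along $\alpha_2$ and the band-sum of $D''$ along $\alpha_1$ are the \emph{same} annulus $A$ (two disks joined by two bands), so $\partial$-compressing $A$ in the two evident ways recovers $D'$ and $D''$, which are different by construction. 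This ``two disks, two bands'' trick is what makes the double decomposition automatic rather than something to be hunted for in a picture; if you add it, your plan becomes essentially the paper's proof. Your further idea of building the example in $H_2$ and embedding it into $H_g$ is plausible but adds an extra verification (that essentiality and non-isotopy persist in the larger handlebody) that the paper avoids by working in a fixed handlebody directly.
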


The following is an example of Proposition\ref{prop3.3}.

\begin{exa}

As in Fig.\ref{fig13}, $D_1$ and $D_2$ are two different nonseparating essential disks in $H_5$. $\alpha_1$ and $\alpha_2$ are all simple arcs connecting $\partial D_1$ and $\partial D_2$ on $\partial H_5$.
Push $int(D_1\cup (\alpha_1\times [0,1])\cup D_2)$ into $int(H_5)$ to obtain an essential disk $D'$ in $H_5$.
Similarly, push $int(D_1\cup (\alpha_2\times [0,1])\cup D_2)$ into $int(H_5)$ to obtain an essential disk $D''$ in $H_5$. Then $D'$ and $D''$ are disjoint different essential disks in $H_5$.
Now doing band-sum to $D'$ along $\alpha_2$ can result in an essential annulus $A$ in $H_5$. It is easy to see that $A$ can also be obtained by doing band-sum to $D''$ along $\alpha_1$. In other words, $\partial-$compressing $A$ can result in $D'$ and $D''$, and these two disks are different.

\end{exa}

\begin{figure}[htbp]
\centering
\includegraphics[scale=0.6]{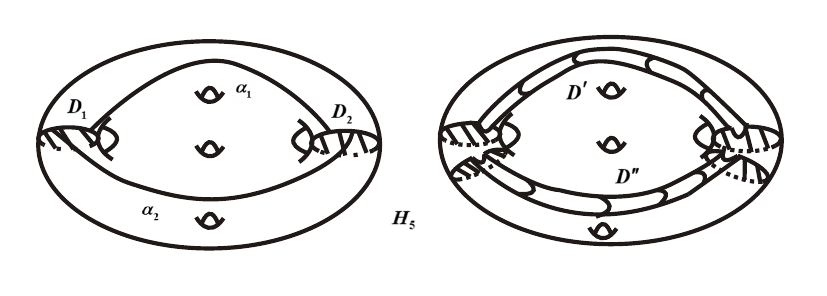}
\caption{$\partial-$compressing $A$ results in different disks.}\label{fig13}
\end{figure}

\section{The connectivity of the annulus complex}

Firstly, consider the annulus complex of the orientable handlebody of genus two.

\begin{thm}\label{thm2.7}
Let $H_2$ be an orientable handlebody of genus $2$. Then the annulus complex $\mathcal{A}$ of $H_2$ is not connected.
\end{thm}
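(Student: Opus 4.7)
My plan is to establish disconnectedness by exhibiting two essential annuli $A_1, A_2 \subset H_2$ that lie in distinct connected components of $\mathcal{A}$. The approach is invariant-based: I will identify a combinatorial quantity $\iota$ on vertices of $\mathcal{A}$ that is constant on each connected component and takes different values on $A_1$ and $A_2$.

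For the two annuli, I would use Proposition~\ref{prop2.5} and the band-sum construction of Section~2 to produce candidates of qualitatively different types: for example, $A_1$ obtained by band-summing a non-separating disk along a short arc (so $A_1$ is non-separating with boundary curves parallel in $\partial H_2$ to the boundary of that disk), and $A_2$ obtained by band-summing a separating disk so that $A_2$ is separating. An Euler-characteristic computation using the separating/non-separating dichotomy proved earlier in Section~3 shows that $H_2 \setminus A$ is either a genus-$2$ handlebody (when $A$ is non-separating) or a disjoint union of a solid torus and a genus-$2$ handlebody with genera summing to $3$ (when $A$ is separating), so the raw complement topology already distinguishes $A_1$ from $A_2$.

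For preservation, however, the separating/non-separating dichotomy itself is \emph{not} adjacency-invariant: the genus-$2$ piece of the complement of a separating annulus can house a non-separating essential annulus, yielding a disjoint pair of mixed types. The invariant must therefore be genuinely finer. Natural candidates include: the isotopy class of a canonically chosen boundary-compressing disk of $A$ (using Proposition~\ref{prop3.3} to control the ambiguity); the homology class of the core of $A$ in $H_1(H_2; \mathbb{Z}/2)$ together with its placement relative to a fixed meridian system; or a refined labelling of the pair $(H_2 \setminus A,\; \partial H_2 \setminus \partial A)$ tracking not only handlebody genera but also the sub-surface structure of the boundary pattern. I would then verify adjacency-invariance by case analysis: if $B \cap A = \emptyset$ then $B$ is a properly embedded essential annulus in a component of $H_2 \setminus A$, and Proposition~\ref{prop2.5}, which caps maximal systems in $H_2$ at three annuli, sharply limits the resulting configurations.

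The central obstacle is exactly to pin down an invariant that is both fine enough to separate $A_1$ from $A_2$ and robust enough to be preserved under a single disjointness move. The coarse topology-of-complement invariant fails for the reason above, so a more delicate numerical or combinatorial quantity is required; constructing it, and proving it is stable under adjacency, is where the bulk of the work lies. Because $g = 2$ is small and Proposition~\ref{prop2.5} forces all disjoint families to be of bounded size, the required case analysis is finite and in principle tractable, but it is the heart of the argument.
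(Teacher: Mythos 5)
Your proposal does not reach a proof: you correctly identify that the ``central obstacle'' is to construct an adjacency-invariant quantity separating two annuli, but you never construct it, and you yourself observe that the obvious candidate (the separating/non-separating dichotomy) fails. As written, the argument is a plan whose hardest step is left open, so there is a genuine gap.

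More importantly, you are looking at Proposition~\ref{prop2.5} from the wrong end. You use it only as an upper bound (``maximal systems have at most three annuli'') to make your case analysis finite. The statement also asserts that a maximal collection of essential annuli in $H_2$ \emph{can consist of exactly one annulus}, and that single fact is the entire proof. If $\mathfrak{A}=\{A\}$ is maximal, then by the definition of maximality every essential annulus disjoint from $A$ is parallel to $A$, i.e.\ represents the same vertex; hence every essential annulus $A'$ not isotopic to $A$ satisfies $A\cap A'\neq\phi$, so the vertex $A$ has no neighbors in $\mathcal{A}$. Since $\mathcal{A}$ has other vertices (e.g.\ by the band-sum constructions of Section~3), $A$ is an isolated vertex and $\mathcal{A}$ is disconnected. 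No component-invariant is needed: disconnectedness is witnessed by an isolated vertex, not by two annuli of ``different types.'' I would encourage you to redo the argument along these lines; the invariant-based approach, besides being incomplete, is likely much harder to push through than the problem warrants.
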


\begin{proof}
By Proposition\ref{prop2.5}, there exists a maximal collection of essential annuli, say $\mathfrak{A}$, in $H_2$ which contains only one essential annulus. Assume $\mathfrak{A}=\{A\}$. Since $\mathfrak{A}$ is maximal, for any other essential annulus $A'$ in $H_2$ which is different from $A$, we know that $A\cap A'\neq \phi$. Thus, for any $B\in \mathcal{A}$, $A$ and $B$ can't be connected by a path in $\mathcal{A}$. So $\mathcal{A}$ is not connected.
\end{proof}

\begin{rem}
For the solid torus, there doesn't exist essential annulus in it. So the annulus complex of the solid torus is empty. By Theorem\ref{thm2.7}, we know the annulus complex $\mathcal{A}$ of $H_2$ is not connected. So our main theorem is about the orientable handlebody of genus at least $3$.
\end{rem}

From now on, we always assume the genus of $H_g$ is at least $3$. All the arcs considered here are simple.

To detect the connectivity of $\mathcal{A}$, we need to discuss the intersections of annuli in $\mathcal{A}$.
Since $H_g$ is an irreducible 3-manifold, using the usually cutting-pasting methods in combinatorial topology, we can assume each component of $A_1\cap A_2$ is an arc in $A_i (i=1,2)$ in the sense of isotopy.

We first prove the connectivity of $\mathcal{A}$ for two special cases, and then prove the connectivity of $\mathcal{A}$ for the general case.

\begin{lem}\label{lem1}

If each component of $A_1\cap A_2$ is an arc with boundary points belonging to the same component of $\partial A_i$ for $i=1,2$, then $A_1$ and $A_2$ are connected in $\mathcal{A}$.

\end{lem}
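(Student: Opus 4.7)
The plan is to proceed by induction on the intersection number $n = |A_1 \cap A_2|$. The base case $n = 0$ is immediate, as $A_1$ and $A_2$ are disjoint and hence adjacent in $\mathcal{A}$. For the inductive step with $n \geq 1$, the goal is to construct an essential annulus $A_3$ that is disjoint from $A_2$ (and so adjacent to $A_2$ in $\mathcal{A}$) and satisfies $|A_3 \cap A_1| < n$ with intersection arcs still obeying the hypothesis of the lemma; the inductive hypothesis then yields a path in $\mathcal{A}$ from $A_1$ to $A_3$, which extends to a path from $A_1$ to $A_2$.

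To construct $A_3$, choose an arc $\alpha$ of $A_1 \cap A_2$ that is outermost in $A_1$, so it cuts off a disk $\Delta \subset A_1$ with $int(\Delta) \cap A_2 = \phi$. By the hypothesis, $\alpha$ has both endpoints on a single boundary component of $A_2$, and so also cuts off a disk $\Delta'' \subset A_2$. Glue $\Delta$ and $\Delta''$ along $\alpha$ and push slightly into $int(H_g)$ off both $A_1$ and $A_2$ to obtain a properly embedded disk $D \subset H_g$ with $\partial D \subset \partial H_g$. By construction $D$ is disjoint from $A_2$, while $D \cap A_1$ consists only of arcs arising from the arcs of $A_1 \cap A_2$ lying in $int(\Delta'')$; in particular $|D \cap A_1| \leq n-1$.

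Two subcases arise. If $D$ is essential in $H_g$, the assumption $g \geq 3$ provides enough room to pick a simple arc $\alpha^*$ on $\partial H_g$ with both endpoints on the component $\beta$ of $\partial D$ lying in $\partial A_1$ and with interior disjoint from $\partial A_1 \cup \partial A_2$, such that the band-sum $A_3 = D \cup (\alpha^* \times [0,1])$ is an essential annulus. Then $A_3$ is disjoint from $A_2$ and $|A_3 \cap A_1| = |D \cap A_1| < n$; moreover the arcs of $A_3 \cap A_1$ retain both endpoints on the single boundary component of $A_3$ coming from $\beta'' = \partial \Delta'' \cap \partial H_g$, so the intersection-type hypothesis is preserved for the pair $(A_1,A_3)$ and induction applies. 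If instead $D$ is inessential, then $\partial D$ bounds a disk $E$ on $\partial H_g$, and by irreducibility of $H_g$ the sphere $D \cup E$ bounds a 3-ball $B$ in $H_g$. After an inner inductive argument that clears $int(B)$ of any stray portions of $A_1 \cup A_2$, one can ambient-isotope $A_2$ across $B$, sweeping $\Delta''$ onto $E$ and eliminating $\alpha$ from $A_1 \cap A_2$; this produces an annulus isotopic to $A_2$ with intersection number strictly less than $n$, still satisfying the hypothesis, and induction completes the argument.

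The main obstacles are twofold. In the essential subcase, the band-sum arc $\alpha^*$ must be placed carefully: its endpoints must lie on $\beta$ (and not on $\beta''$) so that the inherited arcs of $A_3 \cap A_1$ retain both endpoints on one boundary component of $A_3$, and its interior must avoid $\partial A_1 \cup \partial A_2$; both demands are accommodated by the hypothesis $g \geq 3$, which gives enough complexity in $\partial H_g$ to guarantee the existence of such an essential band-sum arc. In the inessential subcase, ensuring that the isotopy across $B$ introduces no new intersections requires first emptying $int(B)$ of other pieces of $A_1 \cup A_2$ via inner surgeries on innermost disks and outermost arcs contained in $B$; this is the most technical part, and it is where most of the detailed cut-and-paste work is concentrated.
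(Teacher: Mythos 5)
Your overall strategy---induction on $|A_1\cap A_2|$, an outermost arc, and a cut-and-paste producing a new essential annulus adjacent to one of the two and meeting the other in fewer arcs---is the same as the paper's, but the particular surgery you chose opens two genuine gaps. You glue the two disk pieces $\Delta\subset A_1$ and $\Delta''\subset A_2$ cut off by $\alpha$ into a properly embedded \emph{disk} $D$, and must then band-sum $D$ back into an annulus. In the essential subcase this requires an arc $\alpha^*$ with both endpoints on the specific subarc $\beta=\partial\Delta\cap\partial H_g$, with interior disjoint from $\partial A_1\cup\partial A_2$, and such that $D\cup(\alpha^*\times[0,1])$ is essential. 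You assert that $g\geq 3$ guarantees such an arc, but give no argument, and existence is far from automatic: the interior of $\alpha^*$ must lie in a single component $R$ of $\partial H_g\setminus(\partial A_1\cup\partial A_2)$ adjacent to $\beta$, and since the four boundary circles of $A_1$ and $A_2$ meet each other in $2|A_1\cap A_2|$ points they can decompose $\partial H_g$ entirely into disks; if such an $R$ is a disk meeting $\beta$ in a single arc, every admissible $\alpha^*$ is isotopic rel endpoints into $\beta$, the boundary circle $\alpha^*\cup\beta_{mid}$ of the band-sum bounds a disk on $\partial H_g$, and the resulting annulus is compressible. The genus hypothesis controls $\chi(\partial H_g)$, not how $\partial A_1\cup\partial A_2$ cuts it up near $\beta$. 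The inessential subcase has a second problem: ``clearing $int(B)$ of stray portions of $A_1\cup A_2$'' by inner surgeries is not a legitimate move, since surgery changes the isotopy class of $A_1$; one would have to \emph{isotope} $A_1$ out of $B$, and the sweep of $\Delta''$ across $B$ onto $E$ can create new intersections wherever $\partial A_1$ crosses $int(E)$. Neither subcase is closed as written.

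The paper sidesteps both issues by performing the complementary surgery: take $\alpha$ outermost in $A_2$, cutting off a disk $E_1\subset A_2$ with $E_1\cap A_1=\alpha$, let $E_2$ be the disk $\alpha$ cuts off in $A_1$, and set $A_1'=(A_1\setminus E_2)\cup E_1$. Keeping the annular piece of $A_1$ and swapping only its disk piece yields an annulus directly, with $A_1'\cap A_1=\phi$ and $|A_1'\cap A_2|<|A_1\cap A_2|$, so no band-sum arc must be found and no case split on the essentiality of an auxiliary disk is needed. In your notation the analogous move is $A_2'=(A_2\setminus\Delta'')\cup\Delta$, which is disjoint from $A_2$, meets $A_1$ in fewer arcs, and still satisfies the hypothesis of the lemma; replacing your disk $D=\Delta\cup\Delta''$ by this swap makes your induction go through.
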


\begin{proof}

Assume $|A_1 \cap A_2|=m$. The conclusion can be obtained by the induction on $m$.

If $m=0$, then $A_1$ and $A_2$ are disjoint. Thus, $A_1$ and $A_2$ are connected in $\mathcal{A}$.

Assume for $m=k>0$, the conclusion is correct.

When $m=k+1$. Choose one component of $A_1\cap A_2$, say $\alpha$, which is outermost relative to $A_2$. Namely, one component of $A_2\setminus \alpha$ is a disk, say $E_1$, which intersects $A_1$ only in the arc $\alpha$. Now $A_1\setminus \alpha$ contains two components with one of which a disk, say $E_2$.

Now let
$A_1'=(A_1\setminus E_2) \cup E_1$. In the sense of isotopy, $A_1'$ is an essential annulus in $H_g$ with $A_1'\cap A_1=\phi$ and
$|A_1'\cap A_2|< |A_1\cap A_2|$. So $|A_1'\cap A_2|\leq k$.
Then by the induction, there exists a path $A_1'-\cdots -A_2$ in $\mathcal{A}$ from $A_1'$ to $A_2$.
So there exists a path $A_1-A_1'-\cdots -A_2$ in $\mathcal{A}$ from $A_1$ to $A_2$.

\end{proof}

Because of Lemma\ref{lem1}, for any essential annuli $A_1,A_2\in \mathcal{A}$  with $A_i=D_i\cup (\alpha_i\times [0,1]) (i=1,2)$, we can assume $(\alpha_1\times ([0,\frac{1}{2})\cup (\frac{1}{2},1]))\cap (\alpha_2\times ([0,\frac{1}{2})\cup (\frac{1}{2},1]))=\phi$.

\begin{thm}\label{thm1}

Let $H_g$ be an orientable handlebody of genus $g\geq 3$, and $D$ be an essential disk in $H_g$. Suppose $A_1=D\cup (\alpha_1\times [0,1])$ and $A_2=D\cup (\alpha_2\times [0,1])$. Then $A_1$ and $A_2$ are connected by a
path in the annulus complex $\mathcal{A}$ of $H_g$.
\end{thm}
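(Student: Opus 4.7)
My plan is to prove the theorem by induction on $n = |\alpha_1 \cap \alpha_2|$, the minimal geometric intersection number of the two arcs on $\partial H_g$.

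For the base case $n = 0$, I would realize $A_1$ and $A_2$ using two disjoint parallel copies of $D$, chosen on the sides from which the arcs $\alpha_1, \alpha_2$ approach $\partial D$; together with the standard push of the bands $\alpha_i \times [0,1]$ into $\mathrm{int}(H_g)$, this yields disjoint representatives of $A_1$ and $A_2$, so they span a $1$-simplex in $\mathcal{A}$.

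For the inductive step $n \geq 1$, I would produce an intermediate essential arc $\alpha_3$ associated with $D$ with $|\alpha_3 \cap \alpha_i| < n$ for both $i = 1, 2$; then the inductive hypothesis gives paths $A_1 \sim A_3$ and $A_3 \sim A_2$ in $\mathcal{A}$, where $A_3 = D \cup (\alpha_3 \times [0,1])$. The construction of $\alpha_3$ splits according to whether $\alpha_1, \alpha_2$ meet $\partial D$ from the same side or from opposite sides. In the same-side case I would pick any $p \in \alpha_1 \cap \alpha_2$ and perform an arc surgery at $p$, concatenating a subarc of $\alpha_1$ ending at $p$ with a subarc of $\alpha_2$ starting at $p$ and pushing the result slightly off the used halves within $\partial H_g$. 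Writing $\partial\alpha_1 = \{a,b\}$ and $\partial\alpha_2 = \{c,d\}$, the four endpoints all lie on a single side of $\partial D$, so the resulting $\alpha_3$ (with endpoints such as $\{a,c\}$) is admissible; a direct count of intersections on each subarc yields $|\alpha_3 \cap \alpha_i| \leq n - 1$ for both $i$. In the opposite-side case, any single-point surgery produces mixed-side endpoints, so I would instead choose $\alpha_3$ to be a fresh arc in $\partial H_g \setminus (\alpha_1 \cup \alpha_2)$ with both endpoints on $\partial D$ on the same side as $\alpha_1$; because $g \geq 3$ leaves ample room on $\partial H_g$, such an arc exists, and by the base-case argument applied twice, $A_3$ can be made disjoint both from $A_1$ (same side, disjoint arcs) and from $A_2$ (opposite sides, disjoint arcs).

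The chief obstacle is verifying that $\alpha_3$ is essential, i.e., that $A_3$ is neither boundary-parallel nor compressible in $H_g$. If the first choice of $\alpha_3$ fails essentiality, I would try the alternative surgery pairing at $p$, choose a different intersection point, or perturb the fresh arc; the expectation is that at least one such choice succeeds because the failure modes (boundary-parallel or compressible) impose restrictive topological constraints on $\alpha_1 \cup \alpha_2 \cup \partial D$ that cannot simultaneously disqualify every candidate when $g \geq 3$.
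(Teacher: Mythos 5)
Your overall architecture (induction on $|\alpha_1\cap\alpha_2|$, arc surgery in the same-side case, a fresh disjoint arc in the opposite-side case) parallels the paper's, but your base case contains a genuine error that the rest of the induction rests on. You claim that when $\alpha_1\cap\alpha_2=\emptyset$ you can always realize $A_1$ and $A_2$ disjointly by taking two parallel copies of $D$ ``on the sides from which the arcs approach $\partial D$.'' This works only when the two arcs approach $\partial D$ from \emph{opposite} sides. When both arcs approach from the same side, both parallel copies of $D$ must lie on that same side of a collar $D\times[0,1]$, and then the band of whichever annulus uses the inner copy is forced to run across the outer copy of $D$ to escape the collar, producing two arcs of intersection; the paper states exactly this ($|A_1\cap A_2|=2$ in Subcase 1.2 even when $\alpha_1\cap\alpha_2=\emptyset$, and likewise $A_1\cap A_2\neq\emptyset$ in Subcase 1.1). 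So two band-sums of the same disk along disjoint same-side arcs are in general \emph{not} disjoint in $\mathcal{A}$, and your claimed $1$-simplex does not exist. The paper's remedy is to route through a third annulus $A_3=D\cup(\alpha_3\times[0,1])$ with $\alpha_3$ on the \emph{other} side of $D$ (in the other component $\partial H_{g_2}$ when $D$ separates, or with $\partial\alpha_3\subset\partial D''$ when $D$ is nonseparating), whose existence is guaranteed by an Euler characteristic count using $g\geq 3$; this is precisely where the genus hypothesis enters, and it is conspicuous that your base case makes almost no use of $g\geq 3$.

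Because the same-side base case is the terminal step of your induction, the gap propagates: both your same-side surgery step and your opposite-side step ultimately appeal to ``disjoint arcs $\Rightarrow$ disjoint annuli,'' including the claim that your fresh arc $\alpha_3$ (same side as $\alpha_1$, disjoint from it) gives $A_3$ disjoint from $A_1$. To repair the argument you must replace the same-side base case with a length-two path through an opposite-side annulus as above. A secondary, lesser issue: your treatment of essentiality of the surgered arc is only an ``expectation''; the paper is also brief here, but you should at least note that among the two resolutions $\beta_1\cup\gamma$ and $\beta_2\cup\gamma$ at the chosen intersection point one yields an essential annulus, rather than deferring to unspecified perturbations.
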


\begin{proof}

There are two cases to be considered.

\textbf{Case 1:} $\alpha_1$ and $\alpha_2$ are on the same side of $D$.

\textbf{Subcase 1.1:} $D$ is separating in $H_g$. Let $H_g\setminus D=H_{g_1}\cup H_{g_2}$ with $g_i\geq1(i=1,2)$ and $g_1+g_2=g$. See Fig.\ref{fig8}(a).

\begin{figure}[htbp]
\centering
\includegraphics[scale=0.6]{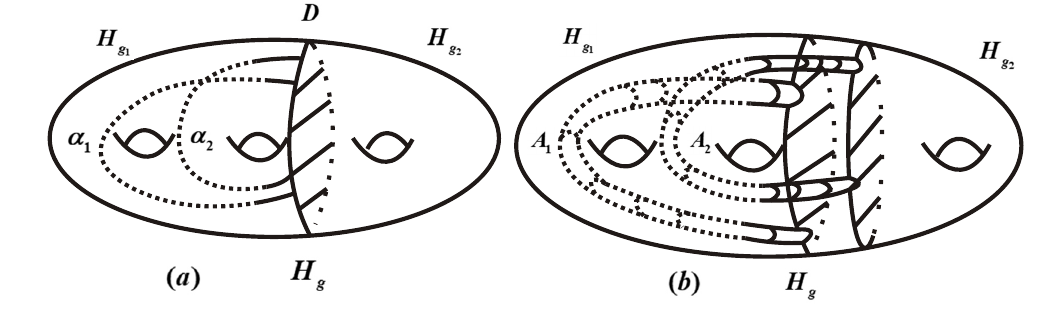}
\caption{$D$ is separating with $\alpha_1$ and $\alpha_2$ on the same side of $D$.}\label{fig8}
\end{figure}

Without loss of generality, suppose $\alpha_1$ and $\alpha_2$ are all contained in $\partial H_{g_1}$.
As in Fig.\ref{fig8}(b), $A_1\cap A_2\neq \phi$.
Then we can choose an essential arc $\alpha_3\subset\partial H_{g_2}$ associated with $D$ to make the annulus $A_3=D\cup (\alpha_3\times [0,1])$ essential in $H_g$. Now $A_3\cap A_1=A_3\cap A_2=\phi$. So $A_1-A_3-A_2$ is a path in $\mathcal{A}$ from $A_1$ to $A_2$.

\textbf{Subcase 1.2:} $D$ is nonseparating in $H_g$. Let $H_g\setminus D=H'$. Then $H'$ is a genus $g-1$ handlebody. Assume the two cutting sections of $D$ on $\partial H'$ are denoted by $D^{'}$ and $D^{''}$. Without loss of generality, assume $\partial \alpha_i\in \partial D^{'}$ for $i=1,2$. See Fig.\ref{fig9}.

\begin{figure}[htbp]
\centering
\includegraphics[scale=0.6]{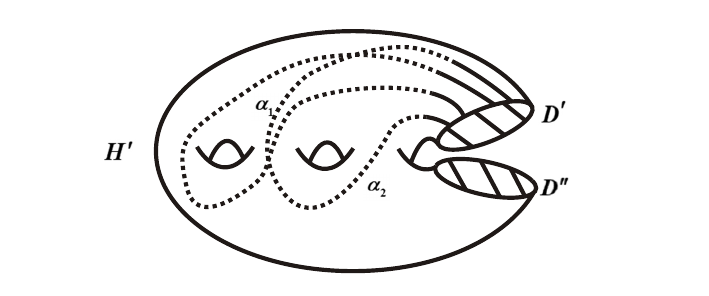}
\caption{$D$ is nonseparating with $\alpha_1$ and $\alpha_2$ on the same side of $D$.}\label{fig9}
\end{figure}

Now consider $|\alpha_1\cap \alpha_2|$. The conclusion is obtained by the induction on $|\alpha_1\cap \alpha_2|$.

When $|\alpha_1\cap \alpha_2|=0$. It is easy to see that $|A_1\cap A_2|=2$. See Fig.\ref{fig10}.

\begin{figure}[htbp]
\centering
\includegraphics[scale=0.6]{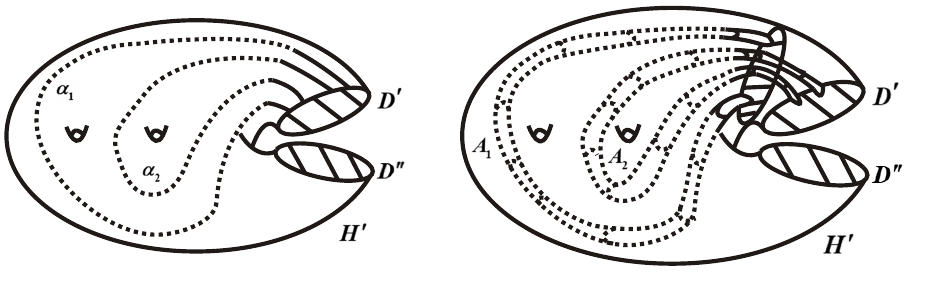}
\caption{$D$ is nonseparating with $\alpha_1$ and $\alpha_2$ on the same side of $D$ and $\alpha_1\cap\alpha_2=\phi$.}\label{fig10}
\end{figure}

At this time, $\chi ((\partial H'\setminus(D'\cup D''))\setminus (\alpha_1\cup \alpha_2))=2-2(g-1)-2+2=4-2g\leq -2$. So we can choose an essential properly embedded arc, say $\alpha_3$, associated with $D$ in $\partial H'\setminus(D'\cup D'')$ such that $\partial \alpha_3 \in \partial D^{''}$ and $\alpha_1\cap \alpha_3=\alpha_2\cap \alpha_3=\phi$. Now
$A_3=D\cup (\alpha_3\times [0,1])$ is an essential annulus in $H_g$ with $A_1\cap A_3=A_2\cap A_3=\phi$. So $A_1-A_3-A_2$ is a path in $\mathcal{A}$ from $A_1$ to $A_2$. The conclusion is correct.

Assume for $|\alpha_1\cap \alpha_2|=k>0$, the conclusion is correct.

When $|\alpha_1\cap \alpha_2|=k+1$. Let $\partial \alpha_1=\{P_1, P_2\}$ and $\partial \alpha_2=\{P_3, P_4\}$.
Without loss of generality, choose the point $P_3$.
Now there exists a point of $\alpha_1\cap \alpha_2$, say $P$, which is closest to $P_3$ in $\alpha_2$. Here, $P$ closest to $P_3$ in $\alpha_2$ means that $\alpha_2\setminus P=\gamma\cup \gamma'$, $\partial \gamma=\{P, P_3\}$ and $int(\gamma)\cap \alpha_1=\phi$.
Let $\alpha_1\setminus P=\beta_1\cup\beta_2$. Now there are two essential properly embedded arcs $(\alpha_1\setminus \beta_1) \cup \gamma=\beta_2\cup \gamma$ and
$(\alpha_1\setminus \beta_2) \cup \gamma=\beta_1\cup \gamma$ associated with $D$. Push these two arcs slightly to make them disjoint from $\alpha_1$.
Let $\alpha_3$ denote any one of these two arcs. It is easy to see that
$| \alpha_3\cap \alpha_2|< |\alpha_1\cap \alpha_2|$. So $|\alpha_3\cap \alpha_2|\leq k$.
Let $A_3=D\cup (\alpha_3\times [0,1])$.
By the induction,
there exists a path $A_3-\cdots-A_2$ in $\mathcal{A}$ from $A_3$ to $A_2$.
Since $|\alpha_1\cap \alpha_3|=0$, similar to the above discussion, there exists a path $A_1-\cdots-A_3$ in $\mathcal{A}$ from $A_1$ to $A_3$.
Thus, there exists a path $A_1-\cdots-A_3-\cdots-A_2$ in $\mathcal{A}$ from $A_1$ to $A_2$.

\textbf{Case 2:} $\alpha_1$ and $\alpha_2$ are on different sides of $D$.

\textbf{Subcase 2.1:} $D$ is separating in $H_g$. Let $H_g\setminus D=H_{g_1}\cup H_{g_2}$ with $g_i\geq1 (i=1,2)$ and $g_1+g_2=g$. See Fig.\ref{fig1}.

Without loss of generality, assume $\alpha_1\subset \partial H_{g_1}$ and $\alpha_2\subset \partial H_{g_2}$. Then $A_1\cap A_2=\phi$. So $A_1$ and $A_2$ are connected in $\mathcal{A}$.

\textbf{Subcase 2.2:} $D$ is nonseparating in $H_g$. Let $H_g\setminus D=H'$.
Then $H'$ is a handlebody of genus $g-1\geq 2$. Assume the two cutting sections of $D$ on $\partial H'$ are denoted by $D'$ and $D''$. Without loss of generality, assume $\partial\alpha_1\in \partial D'$ and $\partial\alpha_2\in \partial D''$. See Fig.\ref{fig11}.

\begin{figure}[htbp]
\centering
\includegraphics[scale=0.6]{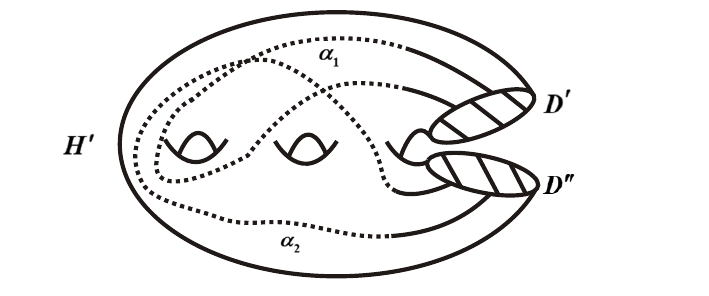}
\caption{$D$ is nonseparating with $\alpha_1$ and $\alpha_2$ on different sides of $D$.}\label{fig11}
\end{figure}

If $\alpha_1 \cap \alpha_2=\phi$, then $A_1\cap A_2=\phi$ and $A_1$ and $A_2$ are connected in $\mathcal{A}$.
If $\alpha_1 \cap \alpha_2\neq \phi$, then $A_1\cap A_2\neq \phi$. At this time, $\chi (\partial H'\setminus(D'\cup D''))=2-2(g-1)-2=2-2g\leq-4$.
So we can choose two disjoint essential properly embedded arcs, say $\alpha_1^{'}$ and $\alpha_2^{'}$, associated with $D'$ and $D''$ respectively in $\partial H'$ such that $\partial \alpha_1^{'}\in \partial D'$ and $\partial \alpha_2^{'}\in \partial D''$.
So $A_1 ^{'}=D\cup (\alpha_1^{'}\times [0,1])$ and
$A_2 ^{'}=D\cup (\alpha_2^{'}\times [0,1])$ are disjoint essential annuli in $H_g$, and $A_1 ^{'}$ and $A_2 ^{'}$ are connected in $\mathcal{A}$.
Now $A_1$ and $A_1 ^{'}$ are all obtained by doing band-sum to $D$ along arcs from the same side of $D$. By Case 1, there exists a path $A_1-\cdots-A_1 ^{'}$ in $\mathcal{A}$ from $A_1$ to $A_1 ^{'}$.
Similarly, $A_2 ^{'}$ and $A_2$ are all obtained by doing band-sum to $D$ along arcs from the same side of $D$. By Case 1, there exists a path $A_2 ^{'}-\cdots-A_2$ in $\mathcal{A}$ from $A_2 ^{'}$ to $A_2$.
So there exists a path $A_1-\cdots-A_1 ^{'}-A_2 ^{'}-\cdots-A_2$ in $\mathcal{A}$ from $A_1$ to $A_2$.

\end{proof}

\begin{thm}\label{thm2}
Let $H_g$ be an orientable handlebody of genus $g\geq 3$, and $D_1$ and $D_2$ be different essential disks in $H_g$ with $D_1\cap D_2=\phi$. Suppose $A_1=D_1\cup (\alpha_1\times [0,1])$ and $A_2=D_2\cup (\alpha_2\times [0,1])$. Then $A_1$ and $A_2$ are connected by a path in the annulus complex $\mathcal{A}$ of $H_g$.
\end{thm}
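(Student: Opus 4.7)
The plan is to bridge $A_1$ and $A_2$ by a pair of disjoint essential annuli $B_1=D_1\cup(\beta_1\times[0,1])$ and $B_2=D_2\cup(\beta_2\times[0,1])$. Once such $B_1$ and $B_2$ are in hand, Theorem~\ref{thm1} produces a path from $A_1$ to $B_1$ in $\mathcal{A}$ (both are band-sums along the common disk $D_1$) and a path from $B_2$ to $A_2$ (both are band-sums along the common disk $D_2$); since $B_1\cap B_2=\phi$, the edge between $B_1$ and $B_2$ joins the two subpaths, yielding a path $A_1-\cdots-B_1-B_2-\cdots-A_2$ in $\mathcal{A}$.

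To construct $\beta_1$ and $\beta_2$ I would exploit the fact that $D_1\cap D_2=\phi$ forces $\partial D_1\cap\partial D_2=\phi$ on $\partial H_g$. The goal is to find arcs $\beta_1,\beta_2\subset\partial H_g$ such that $\beta_1$ has both endpoints on one side of $\partial D_1$ and is disjoint from $\partial D_2$; $\beta_2$ has both endpoints on one side of $\partial D_2$ and is disjoint from $\partial D_1$; the arcs satisfy $\beta_1\cap\beta_2=\phi$; and each $\beta_i$ is essential, so that the resulting annulus $B_i$ is essential in $H_g$. The analysis splits into subcases according to the separation types of $D_1$, $D_2$, and of $D_1\cup D_2$, in the spirit of Theorem~\ref{thm1}. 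In each subcase, cutting $H_g$ along $D_1\cup D_2$ yields one or two handlebodies whose boundary surfaces jointly have Euler characteristic $2-2g\leq -4$ when $g\geq 3$, which is enough complexity to locate two disjoint essential arcs with the prescribed endpoint constraints.

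The principal obstacle is ensuring the essentiality of $B_1$ and $B_2$: the arcs $\beta_1,\beta_2$ must be chosen so that neither $B_1$ nor $B_2$ is compressible or $\partial$-parallel in $H_g$. The most delicate subcase is when one of the disks, say $D_2$, separates $H_g$ into a low-genus piece and a higher-genus piece; there a naive choice of $\beta_2$ on the low-genus side may fail to produce an essential annulus, so one must instead route $\beta_2$ through the higher-genus side of $D_2$, while still keeping $\beta_2$ disjoint from $\beta_1$. This is exactly where the hypothesis $g\geq 3$ is decisive, consistent with Theorem~\ref{thm2.7} which shows the analogous statement fails when $g=2$.
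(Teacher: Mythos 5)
Your high-level strategy --- bridge $A_1$ and $A_2$ by a disjoint pair $B_1=D_1\cup(\beta_1\times[0,1])$, $B_2=D_2\cup(\beta_2\times[0,1])$ and apply Theorem~\ref{thm1} on each side --- is exactly what the paper does in most of its subcases (both disks separating; both nonseparating; and part of the mixed case), and the Euler characteristic count you invoke is the same one the paper uses to find the two disjoint essential arcs. However, there is one configuration in which this bridging strategy is not merely delicate but impossible, and your proposed fix does not address it. Suppose $D_1$ is separating, $H_g\setminus D_1=H_{g_1}\cup H_{g_2}$ with $g_2=1$, and $D_2$ is the meridian disk of the solid torus $H_{g_2}$. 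Any arc $\beta_2$ associated with $D_2$ that avoids $\partial D_1$ lives in the pair of pants $(\partial H_{g_2}\cap\partial H_g)\setminus\partial D_2$, whose boundary components are $\partial D_1$ and the two copies of $\partial D_2$; the unique essential arc class there produces an annulus whose two boundary circles are isotopic to $\partial D_1$ and $\partial D_2$, both of which bound disks, so the resulting annulus is compressible. Hence \emph{every} essential band-sum of $D_2$ must have $\beta_2$ crossing $\partial D_1$, which forces $B_2\cap D_1\neq\phi$ and destroys the disjointness $B_1\cap B_2=\phi$ on which your entire plan rests. Your remedy of ``routing $\beta_2$ through the higher-genus side of $D_2$'' does not apply here: $D_2$ is nonseparating, the obstruction is not a choice of side of $D_2$, and you have also misidentified the hard case as the one where the \emph{separating} disk bounds a low-genus piece (that case is easy --- just place its arc on the other side).

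The paper escapes this subcase by abandoning the direct bridge: it introduces an auxiliary separating disk $D_3\subset H_{g_1}$ disjoint from $D_1$, forms $A_3=D_3\cup(\alpha_3\times[0,1])$, and then concatenates a path from $A_1$ to $A_3$ (two disjoint separating disks, handled as in its first case) with a path from $A_3$ to $A_2$ (separating plus nonseparating with the nonseparating disk now sitting in a genus $\geq 2$ piece). Without some such detour through a third disk, your argument has a genuine gap in this subcase. The rest of your outline is sound, though you should also note that cutting along $D_1\cup D_2$ can yield three components (when both disks separate), not just one or two.
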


\begin{proof}

There are three cases to be considered.

\textbf{Case 1:} Both $D_1$ and $D_2$ are separating in $H_g$.

Let $H_g\setminus (D_1\cup D_2)=H_{g_1}\cup H_{g_2}\cup H_{g_3}$ with $g_1+g_2+g_3=g$ and $g_i\geq 1 (i=1,2,3)$. See Fig.\ref{fig4}.

\begin{figure}[htbp]
\centering
\includegraphics[scale=0.6]{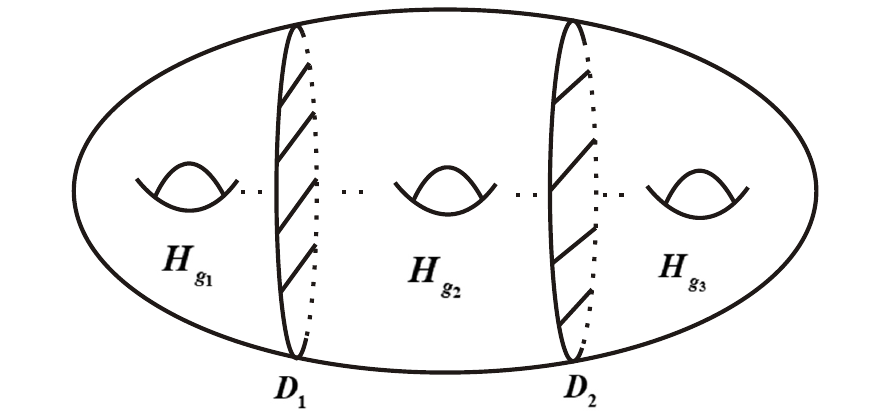}
\caption{Both of $D_1$ and $D_2$ are separating in $H_g$.}\label{fig4}
\end{figure}

As in Fig.\ref{fig4}, $\alpha_1\subset\partial H_{g_1}$ or $\alpha_1\subset\partial (H_{g_2}\cup H_{g_3})$, and $\alpha_2\subset\partial (H_{g_1}\cup H_{g_2})$ or $\alpha_2\subset\partial H_{g_3}$. So there are four subcases to be considered.

\textbf{Subcase 1.1:} $\alpha_1\subset \partial H_{g_1}$ and $\alpha_2\subset \partial H_{g_3}$.

Then $\alpha_1\cap \alpha_2=\phi$. At this time, $A_1\cap A_2=\phi$. So $A_1$ and $A_2$ are connected in $\mathcal{A}$.

\textbf{Subcase 1.2:} $\alpha_1\subset \partial H_{g_1}$ and $\alpha_2\subset \partial (H_{g_1}\cup H_{g_2})$.

If $A_1\cap A_2=\phi$, then $A_1$ and $A_2$ are connected in $\mathcal{A}$.  If $A_1\cap A_2\neq\phi$, then we can choose an essential arc $\alpha_3$ associated with $D_2$ in $\partial H_{g_3}$. Now the essential annulus $A_3=D_2\cup (\alpha_3\times [0,1])$ is disjoint from both $A_1$ and $A_2$. So $A_1-A_3-A_2$ is a path in $\mathcal{A}$ from $A_1$ to $A_2$.

\textbf{Subcase 1.3:} $\alpha_1\subset \partial (H_{g_2}\cup H_{g_3})$ and $\alpha_2\subset \partial H_{g_3}$.

If $A_1\cap A_2=\phi$, then $A_1$ and $A_2$ are connected in $\mathcal{A}$.  If $A_1\cap A_2\neq\phi$, then we can choose an essential arc $\alpha_3$ associated with $D_1$ in $\partial H_{g_1}$. Now the essential annulus $A_3=D_1\cup (\alpha_3\times [0,1])$ is disjoint from both $A_1$ and $A_2$. So $A_1-A_3-A_2$ is a path in $\mathcal{A}$ from $A_1$ to $A_2$.

\textbf{Subcase 1.4:} $\alpha_1\subset \partial (H_{g_2}\cup H_{g_3})$ and $\alpha_2\subset \partial (H_{g_1}\cup H_{g_2})$.

If $A_1\cap A_2=\phi$, then $A_1$ and $A_2$ are connected in $\mathcal{A}$.  If $A_1\cap A_2\neq\phi$, then we can choose an essential arc $\alpha_3$ associated with $D_1$ in $\partial H_{g_1}$, and choose an essential arc $\alpha_4$ associated with $D_2$ in $\partial H_{g_3}$. Now there are two essential annuli $A_3=D_1\cup (\alpha_3\times [0,1])$
and $A_4=D_2\cup (\alpha_4\times [0,1])$ in $H_g$. It is easy to see that $A_1\cap A_3=A_4\cap A_2=A_3\cap A_4=\phi$. So $A_1-A_3-A_4-A_2$ is a path in $\mathcal{A}$ from $A_1$ to $A_2$.

\textbf{Case 2:} One of $D_1$ and $D_2$ is separating and the other is nonseparating in $H_g$. Without loss of generality, assume $D_1$ is separating and $D_2$ is nonseparating in $H_g$.

Let $H_g\setminus D_1=H_{g_1}\cup H_{g_2}$ with $g_1+g_2=g$ and $g_i\geq 1 (i=1,2)$. Without loss of generality, assume $D_2\subset H_{g_2}$. See Fig.\ref{fig3}.

\begin{figure}[htbp]
\centering
\includegraphics[scale=0.6]{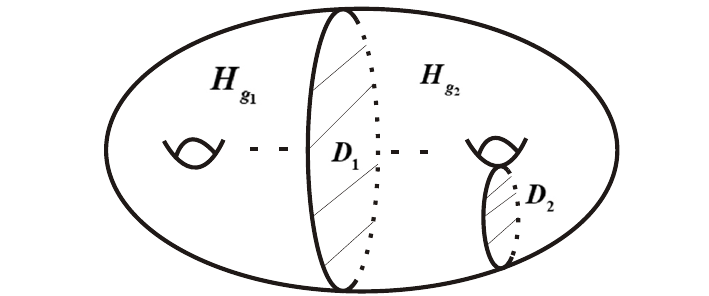}
\caption{$D_1$ is separating and $D_2$ is nonseparating in $H_g$.}\label{fig3}
\end{figure}

Now there are two subcases to be considered.

\textbf{Subcase 2.1:} $g(H_{g_2})=g_2\geq 2$.

Firstly, if $\alpha_2\cap D_1=\phi$, then $\alpha_2\subset \partial H_{g_2}$.
If $\alpha_1\cap \alpha_2=\phi$, then $A_1\cap A_2=\phi$ and $A_1$ and $A_2$ are connected in $\mathcal{A}$.
If $\alpha_1\cap \alpha_2\neq \phi$, then $\alpha_1\subset \partial H_{g_2}$. At this time, we can choose an essential arc, say $\alpha_3$, associated with $D_1$ in $\partial H_{g_1}$ to make the essential annulus $A_3=D_1\cup (\alpha_3\times [0,1])$ disjoint from both $A_1$ and $A_2$.
Thus, $A_1-A_3-A_2$ is a path in $\mathcal{A}$ from $A_1$ to $A_2$.

Secondly, if $\alpha_2\cap D_1\neq \phi$. Since $g(H_{g_2})=g_2\geq 2$, there exists an essential arc, say $\alpha_3$, associated with $D_2$ in $\partial H_{g_2}$ such that $\alpha_3\cap D_1=\phi$.
Let $A_3=D_2\cup (\alpha_3\times [0,1])$.
Then $A_3$ and $A_2$ are all essential annuli obtained by doing band-sum to $D_2$ along arcs. So by Theorem\ref{thm1}, there exists a path $A_3-\cdots-A_2$ in $\mathcal{A}$ from $A_3$ to $A_2$.
Now consider the arc $\alpha_1$.
If $\alpha_1\subset \partial H_{g_1}$, then $\alpha_1\cap \alpha_3=\phi$ and $A_1\cap A_3=\phi$. Thus, there exists a path $A_1-A_3-\cdots-A_2$ in $\mathcal{A}$ from $A_1$ to $A_2$.
If $\alpha_1\subset \partial H_{g_2}$, then there exists an essential arc, say $\alpha_4$, associated with $D_1$ in $\partial H_{g_1}$.
Let $A_4=D_1\cup (\alpha_4\times [0,1])$.
Since $D_1$ is separating in $H_g$, $\alpha_1$ and $\alpha_4$ are on different sides of $D_1$. So $A_1\cap A_4=\phi$ and $A_1-A_4$ is a path in $\mathcal{A}$ from $A_1$ to $A_4$. At this time, $A_4\cap A_3=\phi$.
Thus, there exists a path $A_1-A_4-A_3-\cdots-A_2$ in $\mathcal{A}$ from $A_1$ to $A_2$.

\textbf{Subcase 2.2:} $g(H_{g_2})=g_2=1$.

At this time, $H_{g_2}$ is a solid torus and $D_2$ is a meridian disk in $H_{g_2}$.
Since $g(H_{g_1})=g_1\geq 2$, there exists an essential separating disk, say $D_3$, in $H_{g_1}$ with $D_3\cap D_1=\phi$. Now $D_3$ is also an essential separating disk in $H_g$ which is disjoint from $D_2$.
As in Fig.\ref{fig12}, assume $H_g\setminus D_3=H^{1}\cup H^{2}$ and $D_2\in H^{2}$.
Since $g(H^{1})\geq 1$, we can choose an essential arc, say $\alpha_3$, associated with $D_3$ in $\partial H^{1}$
to make the annulus $A_3=D_3\cup (\alpha_3\times [0,1])$ essential in $H_g$.
For one thing, since $H_g\setminus D_3=H^{1}\cup H^{2}$, $g(H^{2})\geq 2$ and $D_2\in H^{2}$, similar to Subcase 2.1, there exists a path $A_3-\cdots-A_2$ in $\mathcal{A}$ from $A_3$ to $A_2$.
For another, since $D_1$ and $D_3$ are disjoint separating essential disks in $H_g$, similar to Case 1, there exists a path $A_1-\cdots-A_3$ in $\mathcal{A}$ from $A_1$ to $A_3$.
Thus, there exists a path $A_1-\cdots-A_3-\cdots-A_2$ in $\mathcal{A}$ from $A_1$ to $A_2$.

\begin{figure}[htbp]
\centering
\includegraphics[scale=0.6]{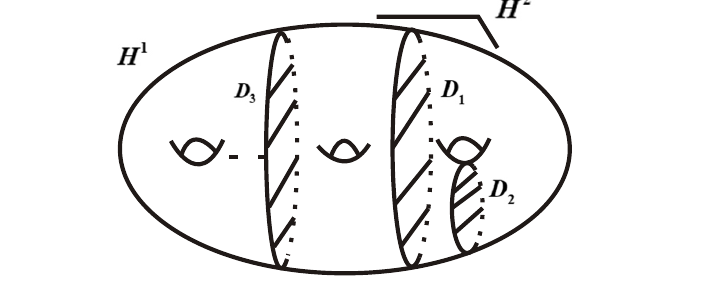}
\caption{There exists an essential separating disk $D_3$.}\label{fig12}
\end{figure}

\textbf{Case 3:} Both $D_1$ and $D_2$ are nonseparating in $H_g$.

Now there are two subcases to be considered.

\textbf{Subcase 3.1:} $H_g\setminus (D_1\cup D_2)$ has two components. Let $H_g\setminus (D_1\cup D_2)=H_{g_1}\cup H_{g_2}$ with $g_i\geq 1 (i=1,2)$ and $g_1+g_2=g-1\geq 2$. See Fig.\ref{fig5}.

\begin{figure}[htbp]
\centering
\includegraphics[scale=0.6]{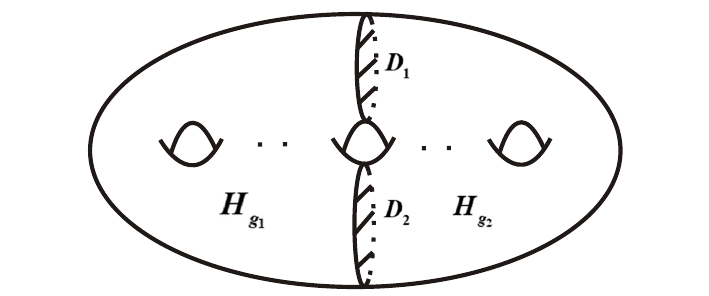}
\caption{The union of $D_1$ and $D_2$ is separating in $H_g$.}\label{fig5}
\end{figure}

When $A_1\cap A_2=\phi$, $A_1$ and $A_2$ are connected in $\mathcal{A}$.

When $A_1\cap A_2\neq \phi$. As in Fig.\ref{fig5}, since $g_i\geq 1 (i=1,2)$, we can choose an essential arc, say $\alpha_3$, associated with $D_1$ in $\partial H_{g_1}$ such that $\alpha_3\cap D_2=\phi$, and choose an
essential arc, say $\alpha_4$, associated with $D_2$ in $\partial H_{g_2}$ such that $\alpha_4\cap D_1=\phi$.
So $\alpha_3\cap \alpha_4=\phi$.
Let $A_3=D_1\cup (\alpha_3\times [0,1])$ and
$A_4=D_2\cup (\alpha_4\times [0,1])$. It is easy to see that $A_3\cap A_4=\phi$.
Since $A_1$ and $A_3$ are all obtained by doing band-sum to $D_1$, by Theorem\ref{thm1}, there exists a path $A_1-\cdots-A_3$ in $\mathcal{A}$ from $A_1$ to $A_3$.
Similarly, since $A_4$ and $A_2$ are all obtained by doing band-sum to $D_2$, by Theorem\ref{thm1}, there exists a path $A_4-\cdots-A_2$ in $\mathcal{A}$ from $A_4$ to $A_2$.
So there exists a path $A_1-\cdots-A_3-A_4-\cdots-A_2$ in $\mathcal{A}$ from $A_1$ to $A_2$.

\textbf{Subcase 3.2:} $H_g\setminus (D_1\cup D_2)$ is a handlebody. Let $H_g\setminus (D_1\cup D_2)=H'$. So $g(H')=g-2\geq 1$. See Fig.\ref{fig6}.

\begin{figure}[htbp]
\centering
\includegraphics[scale=0.6]{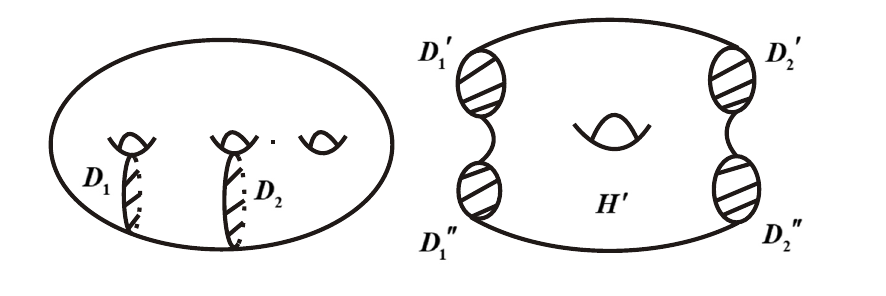}
\caption{The union of $D_1$ and $D_2$ is nonseparating in $H_g$.}\label{fig6}
\end{figure}

As in Fig.\ref{fig6}, for $i=1,2$, assume the two cutting sections of $D_i$ on $\partial H'$ are denoted by $D_i ^{'}$ and $D_i ^{''}$.
Without loss of generality, assume $\partial \alpha_1\in \partial D_1 ^{'}$ and
$\partial \alpha_2\in \partial D_2 ^{'}$.

If $A_1\cap A_2=\phi$, then $A_1$ and $A_2$ are connected in $\mathcal{A}$. If $A_1\cap A_2\neq\phi$, then we get a path from $A_1$ to $A_2$ as follows.
Let $S_{g-2,4}=\partial H'\setminus (D_1 ^{'}\cup D_2 ^{'}\cup D_1 ^{''}\cup D_2 ^{''})$. So $S_{g-2,4}$ is an orientable surface with genus $g-2$ and 4 boundary components.
Since $g\geq 3$, $\chi(S_{g-2,4})=2-2(g-2)-4=2-2g\leq -4$. So there exist two disjoint essential arcs, say $\alpha_3$ and $\alpha_4$, in $S_{g-2,4}$ such that $\partial \alpha_3 \in \partial D_1 ^{'}$, $\partial \alpha_4 \in \partial D_2 ^{'}$.
$A_3=D_1\cup(\alpha_3\times [0,1])$ and
$A_4=D_2\cup(\alpha_4\times [0,1])$ are all essential annuli in $H_g$.
Since $\alpha_3\cap \alpha_4=\phi$, $A_3\cap A_4=\phi$ and $A_3$ and $A_4$ are connected in $\mathcal{A}$.
Since $A_1$ and $A_3$ are all obtained by doing band-sum to $D_1$, by Theorem\ref{thm1}, there exists a path $A_1-\cdots-A_3$ in $\mathcal{A}$ from $A_1$ to $A_3$.
Similarly, since $A_4$ and $A_2$ are all obtained by doing band-sum to $D_2$, by Theorem\ref{thm1}, there exists a path $A_4-\cdots-A_2$ in $\mathcal{A}$ from $A_4$ to $A_2$.
So there exists a path $A_1-\cdots-A_3-A_4-\cdots-A_2$ in $\mathcal{A}$ from $A_1$ to $A_2$.

\end{proof}

The following is our main theorem.

\begin{thm}\label{thm3}
Let $H_g$ be an orientable handlebody of genus $g\geq 3$. Then the annulus complex $\mathcal{A}$ of $H_g$ is connected.
\end{thm}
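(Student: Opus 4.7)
\medskip

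\noindent\textbf{Proof plan.} The plan is to bootstrap from the two special cases already established, Theorems \ref{thm1} and \ref{thm2}, using the connectivity of the disk complex $\mathcal{D}$ of $H_g$ as scaffolding. The idea is that any two essential annuli, each presented as a band-sum along some disk, are reached from each other by walking along a path of disks in $\mathcal{D}$ and applying Theorem \ref{thm2} at each step.

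First I would record the key fact that every essential annulus $A$ in $H_g$ admits a presentation $A = D \cup (\alpha \times [0,1])$ as the band-sum of an essential disk along an associated essential arc. The ``if'' direction of this correspondence is already in the paper (the remark following Definition 2.1), while the converse follows from the standard fact that every essential annulus in a handlebody is $\partial$-compressible to an essential disk, so that reversing the $\partial$-compression recovers $A$ up to isotopy as a band-sum. Given essential annuli $A_1, A_2 \in \mathcal{A}$, I would then fix presentations $A_i = D_i \cup (\alpha_i \times [0,1])$ for $i = 1, 2$.

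Next I would invoke the connectivity of $\mathcal{D}$ (see \cite{McCullough}) to choose a path of essential disks $D_1 = E_0, E_1, \ldots, E_n = D_2$ in $H_g$ with $E_{j-1}$ and $E_j$ disjoint and distinct for every $1 \leq j \leq n$. For each intermediate index $0 < j < n$, the first proposition of Section 3 furnishes an essential arc $\beta_j$ associated with $E_j$, and I would set $B_j = E_j \cup (\beta_j \times [0,1])$. Writing $B_0 = A_1$ and $B_n = A_2$, each consecutive pair $B_j, B_{j+1}$ is a band-sum along the two disjoint distinct essential disks $E_j, E_{j+1}$, so Theorem \ref{thm2} supplies a path from $B_j$ to $B_{j+1}$ in $\mathcal{A}$. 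Concatenating these yields a path from $A_1$ to $A_2$ in $\mathcal{A}$.

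The main step I expect to need care is the opening reduction, namely the claim that every essential annulus in $H_g$ arises as a band-sum of an essential disk along an arc. Once this band-sum presentation is in hand, the combinatorial argument via $\mathcal{D}$ and Theorem \ref{thm2} is essentially formal; note that Theorem \ref{thm1} is not invoked directly in this last step but is already absorbed into the proof of Theorem \ref{thm2}.
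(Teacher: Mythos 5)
Your proposal is correct and follows essentially the same route as the paper: reduce to a band-sum presentation of each annulus, run a path in the disk complex between the two underlying disks, realize each intermediate disk as an annulus via an associated arc, and apply Theorem \ref{thm2} to consecutive (distinct, disjoint) disks; your explicit justification that every essential annulus is $\partial$-compressible to a disk actually makes a step the paper leaves implicit. One small caveat: your closing remark that Theorem \ref{thm1} is never invoked directly overlooks the degenerate case $D_1=D_2$, where the disk path is trivial and Theorem \ref{thm2} (which requires distinct disks) does not apply, so Theorem \ref{thm1} is still needed there, exactly as in the paper's Case 1.
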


\begin{proof}

Choose any two essential annuli $A_1$ and $A_2$ in $\mathcal{A}$. Let $A_1=D_1\cup (\alpha_1\times [0,1])$ and $A_2=D_2\cup (\alpha_2\times [0,1])$.

There are two cases to be considered.

\textbf{Case 1:} $D_1\cap D_2=\phi$.

If $D_1=D_2=D$, then by Theorem\ref{thm1}, $A_1$ and $A_2$ are connected by a path in $\mathcal{A}$.

If $D_1$ and $D_2$ are different essential disks in $H_g$, then by Theorem\ref{thm2}, $A_1$ and $A_2$ are connected by a path in $\mathcal{A}$.

\textbf{Case 2:} $D_1\cap D_2\neq \phi$.

If $D_1\cap D_2\neq \phi$, then by the connectivity of the disk complex $\mathcal{D}$, there exists a path $D_1=E_1-\cdots-E_j-\cdots-E_n=D_2$ in $\mathcal{D}$ from $D_1$ to $D_2$.
For each $2\leq j\leq n-1$, since $E_j$ is an essential disk in $H_g$, there exists an essential arc, say $\beta_j$, associated with $E_j$ in $\partial H_g$.
Now for each $2\leq j\leq n-1$,
$A_j^{'}=E_j\cup (\beta_j\times [0,1])$
is an essential annulus in $H_g$.
Let $\beta_1=\alpha_1$, $\beta_n=\alpha_2$, $A_1^{'}=A_1$ and $A_n^{'}=A_2$.
Thus, for $j=1,\cdots,n-1$, the essential annuli $A_j ^{'}$ and $A_{j+1} ^{'}$ are obtained by doing band-sum to disjoint essential disks $E_j$ and $E_{j+1}$ along arcs $\beta_i$ and $\beta_{i+1}$, respectively. By Theorem\ref{thm2}, $A_j ^{'}$ and $A_{j+1} ^{'}$ are connected by a path $A_j ^{'}-\cdots-A_{j+1} ^{'}$ in $\mathcal{A}$.
Thus, there exists a path $A_1=A_1 ^{'}-\cdots-A_2 ^{'}-\cdots-A_j ^{'}-\cdots-A_{n-1} ^{'}-\cdots-A_{n} ^{'}=A_2$ in $\mathcal{A}$ from $A_1$ to $A_2$. So $A_1$ and $A_2$ are connected in $\mathcal{A}$.

\end{proof}

\end{document}